\let\oldlabel=\label
\def\prellabel{\marginparsep=1em\marginparwidth=44pt
  \def\label##1{\oldlabel{##1}\ifmmode\else\ifinner\else
         \marginpar{{\footnotesize\ \\ \tt
                    ##1}}\fi\fi}}
\def\NZQ{\mathbb}               
\def\NN{{\NZQ N }}
\def\QQ{{\NZQ Q}}
\def\ZZ{{\NZQ Z}}
\def\opn#1#2{\def#1{\operatorname{#2}}} 
\opn\chara{char}
\opn\rank{rank}
\opn\hilb{Hilb}
\opn\gr{gr}
\opn\Rees{{\mathcal R}}
\newtheorem{theorem}{Theorem}[section]
\newtheorem{lemma}[theorem]{Lemma}
\newtheorem{corollary}[theorem]{Corollary}
\theoremstyle{definition}
\newtheorem{remark}[theorem]{Remark}
\newtheorem{example}[theorem]{Example}
\newtheorem{remark/example}[theorem]{Remark/Example}
\newtheorem*{theorem*}{Theorem}
\let\epsilon=\varepsilon
\let\phi=\varphi
\let\kappa=\varkappa
\opn\ini{in}
\opn\KRS{KRS}
\opn\krs{krs}
\opn\Krs{Krs}
\opn\DEL{DEL}
\opn\diag{diag}
\opn\Ker{Ker}
\opn\Image{Im}
\opn\DD{{\mathcal D}}
\opn\SS{{\mathcal S}}
\opn\MM{{\mathcal M}}
\opn\GL{GL}
\opn{\hht}{ht}
\opn\Cl{Cl}
\opn\cl{cl}
\opn\height{height}
\opn\reg{reg}
\opn\Reg{Reg}
\opn\pd{pd}
\opn\supp{supp}
\opn{\HS}{HS}
\def\cR{{\mathcal R}}
\def\mm{{\mathfrak m}}
\def\verteq{|\hspace{-2pt}|}
\def\addots{\mathinner{\mkern1mu\raise1pt\hbox{.}\mkern2mu\raise4pt\hbox{.}
        \mkern2mu\raise7pt\vbox{\kern7pt\hbox{.}}\mkern1mu}}
\numberwithin{equation}{section}
\author{Winfried Bruns}
\address{Universit\"at Osnabr\"uck, Institut f\"ur Mathematik, 49069 Osnabr\"uck, Germany}
\email{wbruns@uos.de}
\author{Aldo Conca}
\address{ Dipartimento di Matematica,
Universit\`a degli Studi di Genova, Italy}
\email{conca@dima.unige.it}
\title{A remark on regularity of powers and products of ideals}
\dedicatory{To the memory of our friend Tony Geramita}
\keywords{asymptotic regularity, powers of ideals, multi-Rees algebra}
\subjclass[2010]{13D02, 13D40}
\date{}
\begin{document}

\begin{abstract}
We give a simple proof for the fact that the Castelnuovo-Mumford regularity and related invariants of products of powers of ideals are asymptotically linear in the exponents, provided that each ideal is generated by elements of constant degree. We provide examples showing that the asymptotic linearity is false in general. On the other hand, the regularity is always given by the maximum of finitely many linear functions whose coefficients belong to the set of the degrees of generators of the ideals. 
\end{abstract}

\maketitle


\section{Introduction}
Let $I$ be an homogeneous ideal of a polynomial ring $R=K[x_1,\dots, x_n]$. Cutkosky, Herzog and Trung \cite{CHT}  and, independently, Kodiyalam \cite{Kod} proved in   that the Castelnuovo-Mumford regularity $\reg(I^a)$ of the powers of $I$ is a linear function of $a\in \NN$ for large $a$. In \cite[Remark pg.252]{CHT}  it is  asserted  that the same result holds for products of powers of ideals $I_1,\dots, I_m$, i.e., $\reg(I_1^{a_1}\cdots I_m^{a_m})$ is a linear function in  $a=(a_1,\dots, a_m)\in \NN^m$ if   $a_i\gg 0$  for every $i$. We show that this is actually the case when each ideal $I_i$ is generated  by polynomials of a given degree, but false in general. The method of proof allows us to easily generalize the results to ideals in arbitrary standard graded $K$-algebras.

We refer the reader to \cite{BH} for basic commutative algebra.

After the fist version of this note had been uploaded to arXiv.org, M. Chardin informed us that the main result is a special case of a theorem proved by Bagheri, Chardin and H\'a \cite[Throrem 4.6]{BCH}. We hope that our simple approach to the problem and the accompanying examples are nevertheless welcome.

\section{Regularity for powers and products}
For a finitely generated graded non-zero $R$-module $M$ and a nonnegative integer $j\leq \pd_R(M)$ we denote the largest degree of a minimal generator of the $j$-th syzygy module by  $t_j^R(M)$, and, by convention, set $t_j^R(M)=-\infty$ if $j>\pd_R(M)$.  By definition one has $\reg_R(M)=\max\{ t_j^R(M)-j : j\geq 0\}$.

Let $I_1,\dots, I_m$ be non-zero homogeneous ideals of $R$ and let $f_{i1},\dots, f_{ig_i}$ be a minimal homogeneous generating system of $I_i$. Set $d_{ij}=\deg f_{ij}$ and 
$$
B=R[z_{ij} : 1\leq i\leq m,  1\leq j \leq g_i]=K[x_1,\dots, x_n,z_{ij} : 1\leq i\leq m,  1\leq j \leq g_i].
$$ 

To simplify the exposition we set 
$$
I^a=I_1^{a_1}\cdots I_m^{a_m}  \quad\text{for}\quad a\in \NN^m,
$$
and we say that a formula (or a property)  holds for $a\gg 0$ if there exist a $b\in \NN^m$ such that it holds for every $a\in b+\NN^m$.

The multigraded Rees algebra $R(I_1,\dots,I_m)$ can be seen as the subalgebra of $R[t_1,\dots,t_m]$  whose  elements have the form  $\sum_{a\in \NN^m}  F_at^a$ with $F_a\in I^a$. The polynomial ring $R[t_1,\dots,t_m]$ is naturally $\ZZ\times \ZZ^m$-graded if we extend the $\ZZ$-grading on $R$ by setting $\deg(x_i)=(1,0)$ and $\deg t_j=(0,e_j)$ where $e_j$ denotes the $j$-th unit vector in $\ZZ^m$. Evidently $R(I_1,\dots,I_m)$ is a graded subalgebra. It has a structure of  $B$-module via the $R$-algebra map sending $z_{ij}$ to $f_{ij}t_i$. This map is $\ZZ\times \ZZ^m$-graded  if we equip  $B$ with the graded structure  induced by the assignment $\deg(x_i)=(1,0)$ and $\deg(z_{ij})=(d_{ij},e_i)$.

Let $A=K[z_{ij} : 1\leq i\leq m,  1\leq j \leq g_i ]\subset B$ with the induced $\ZZ\times \ZZ^m$-graded structure. Note that $A$ has no elements of degree $(g,0)\in \ZZ\times \ZZ^m$ such that $g\neq 0$. Therefore, given  a finitely generated $\ZZ\times \ZZ^m$-graded $A$-module $M$,  the maximum $\ZZ$-degree in an $a$-graded component, 
$$
\rho_M(a)=\sup\{ i\in \ZZ  : M_{(i,a)}\neq 0\},\quad a\in \ZZ^m,
$$
is finite. The key to the result about the behavior of regularity is a description of $\rho_M(a)$: it is the supremum over the values of finitely many linear functions  $L_k(a)$ where each $L_k$ is defined on a certain subset of $\ZZ^m$. 
In order to describe these subsets we set $[m]=\{1,\dots,m\}$, and
$$
\supp L=\{i\in [m] :\alpha_i\neq 0\}\subset[m]
$$
for a linear polynomial  $L=\sum_{i=1}^m \alpha_iu_i+\alpha_0$. The nonzero coefficients of the relevant linear functions are given by $\ZZ$-degrees $d_{ij}$  of the indeterminates of  $A$:

\begin{lemma} 
\label{lem1}
Let $M$ be a finitely generated $\ZZ\times \ZZ^m$-graded $A$-module. Then there exist $w_1,\dots,w_c\in \ZZ^m$ and linear functions  $L_1,\dots,L_c$ on $\ZZ^m$,
$$
L_k(u)=\sum_{i=1}^m \lambda_{ki} u_i +\lambda_{k0},\qquad \lambda_{k0}\in \ZZ,\quad 
\lambda_{ki}\in \{0\}\cup \{d_{i1}, \dots, d_{ig_i} \},\  i=1,\dots, m
$$ 
such that 
$$
\rho_M(a)=\sup\Bigl\{ L_k(a) :  1\leq k\leq c \mbox{ and }  a\in w_k+\sum_{i\in \supp L_k} \NN e_i\Bigr\}
$$
for every $a\in \ZZ^m$. In particular we have 
$$
\rho_M(a)=\sup\bigl\{ L_k(a) :   1\leq k\leq c \mbox{ and } \supp L_k=[m] \bigr\} \quad\text{for}\quad a\gg 0.
$$
\end{lemma}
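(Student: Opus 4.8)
The plan is to pass, by means of a $\ZZ\times\ZZ^m$-graded prime filtration, to the case in which $M$ is a shift of a coordinate subring $K[z_{ij}:(i,j)\in S]$ of $A$ for some subset $S$ of the index set, and then to compute $\rho_M$ by an elementary optimization. Since $A$ is Noetherian and $M$ is a finitely generated $\ZZ\times\ZZ^m$-graded $A$-module, $M$ admits a finite filtration $0=M_0\subset M_1\subset\dots\subset M_s=M$ by graded submodules with $M_\ell/M_{\ell-1}\iso(A/P_\ell)(-\gamma_\ell)$, where each $P_\ell\subset A$ is a $\ZZ\times\ZZ^m$-graded prime and $\gamma_\ell=(\gamma_{\ell0},\gamma_\ell')\in\ZZ\times\ZZ^m$ (the standard argument from the $\ZZ$-graded case, cf.\ \cite{BH}). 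Passing to the homogeneous component of degree $(i,a)$ is an exact functor, so the short exact sequences $0\to M_{\ell-1}\to M_\ell\to M_\ell/M_{\ell-1}\to0$ yield $(M_\ell)_{(i,a)}\neq 0$ if and only if $(M_{\ell-1})_{(i,a)}\neq0$ or $(M_\ell/M_{\ell-1})_{(i,a)}\neq0$; inductively $M_{(i,a)}\neq0$ iff $(M_\ell/M_{\ell-1})_{(i,a)}\neq0$ for some $\ell$, hence $\rho_M(a)=\max_\ell\rho_{M_\ell/M_{\ell-1}}(a)$. Since $\rho_{N(-\gamma)}(a)=\rho_N(a-\gamma')+\gamma_0$ for a shift $\gamma=(\gamma_0,\gamma')$, and this substitution changes only the translation vectors $w_k$ and the constants $\lambda_{k0}$ of a family of functions as in the lemma, leaving the coefficients $\lambda_{ki}$ $(i\geq1)$ unchanged, it suffices to prove the lemma for $M=A/P$ with $P$ a graded prime; taking the union of the finitely many linear functions obtained for the subquotients then settles the general case.

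For $M=A/P$, set $S=\{(i,j):z_{ij}\notin P\}$ and let $A_S=A/(z_{ij}:z_{ij}\in P)=K[z_{ij}:(i,j)\in S]$ with the grading induced from $A$; there is a graded surjection $A_S\to A/P$. For $c\in\NN^S$ the monomial $z^c=\prod z_{ij}^{c_{ij}}$ lies outside $P$, because a product of elements outside a prime ideal is again outside it; hence its image in $A/P$ is nonzero. Now $(A/P)_{(i,a)}$ is spanned by the images of the monomials of $A$ of degree $(i,a)$, and each such monomial either has this form or has a factor $z_{ij}\in P$ (and then maps to $0$); therefore $(A/P)_{(i,a)}\neq0$ if and only if some monomial of degree $(i,a)$ has exponent vector supported on $S$, that is, if and only if $(A_S)_{(i,a)}\neq0$. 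Thus $\rho_{A/P}=\rho_{A_S}$, and it remains to treat $M=A_S$.

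Write $S_i=\{j:(i,j)\in S\}$. A monomial $\prod_{(i,j)\in S}z_{ij}^{c_{ij}}$ has degree $\bigl(\sum_{(i,j)\in S}c_{ij}d_{ij},\ \sum_i(\sum_{j\in S_i}c_{ij})e_i\bigr)$, so for fixed $a\in\ZZ^m$ the requirement that its $\ZZ^m$-degree equal $a$ reads $\sum_{j\in S_i}c_{ij}=a_i$ for every $i$, and this condition decouples over $i\in[m]$. Consequently
$$
\rho_{A_S}(a)=\sum_{i=1}^m\ \sup\Bigl\{\textstyle\sum_{j\in S_i}c_{ij}d_{ij}\ :\ (c_{ij})_{j\in S_i}\in\NN^{S_i},\ \sum_{j\in S_i}c_{ij}=a_i\Bigr\}.
$$
For a single $i$ this supremum equals $a_i\cdot\max_{j\in S_i}d_{ij}$ if $S_i\neq\emptyset$ and $a_i\geq0$ (concentrate all the weight on an index realizing the maximal degree), it equals $0$ if $S_i=\emptyset$ and $a_i=0$, and it is $-\infty$ otherwise. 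Hence $\rho_{A_S}(a)=-\infty$ unless $a_i\geq0$ for every $i$ with $S_i\neq\emptyset$ and $a_i=0$ for every $i$ with $S_i=\emptyset$, and on that locus it equals the single linear function $L(u)=\sum_{i:\,S_i\neq\emptyset}(\max_{j\in S_i}d_{ij})u_i$. Since the $d_{ij}$ are positive, $\supp L=\{i:S_i\neq\emptyset\}$, and the locus just described is exactly $\sum_{i\in\supp L}\NN e_i$; so the lemma holds for $A_S$ with $c=1$, $w_1=0$ and $L_1=L$, and the first assertion follows through the two reductions above. For the final assertion, any of the resulting functions $L_k$ whose support is a proper subset of $[m]$ is used only on a region in which the coordinates $a_i$ with $i\notin\supp L_k$ are held fixed; so once $a\gg0$ no such region contains $a$, and only the $L_k$ with $\supp L_k=[m]$ remain, their regions $w_k+\NN^m$ being cofinal in $\NN^m$.

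The step I expect to be the crux is the reduction in the second paragraph: the observation that for a $\ZZ\times\ZZ^m$-graded prime $P$ the vanishing of $(A/P)_{(i,a)}$ depends only on which of the variables $z_{ij}$ lie in $P$. That is what collapses the problem to the elementary count of the third paragraph; the prime filtration and the bookkeeping with shifts and maxima are routine.
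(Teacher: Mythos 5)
Your proof is correct, and it arrives at the same endpoint as the paper --- a decomposition of $\rho_M$ into contributions of shifted coordinate subrings $K[z_{ij}: z_{ij}\notin P]$, each handled by the same elementary computation (your ``concentrate the weight on a variable of maximal degree'' is exactly the paper's reading of the rational Hilbert series $1/\prod_{z_{ij}\notin P}(1-x^{d_{ij}}s_i)$) --- but the route through the middle is genuinely different. The paper presents $M$ as $F/U$, passes to the initial module $\ini(U)$ (which preserves the multigraded Hilbert series), and then filters the resulting monomial module, so that the primes occurring in the filtration are automatically generated by subsets of the variables. You instead filter $M$ directly by a $\ZZ\times\ZZ^m$-graded prime filtration, allow arbitrary graded primes $P$, and supply the missing reduction yourself: since a product of variables outside a prime stays outside it, $(A/P)_{(i,a)}\neq 0$ exactly when $(A_S)_{(i,a)}\neq 0$ for $S=\{(i,j): z_{ij}\notin P\}$, so only $P\cap\{z_{ij}\}$ matters for the support of the Hilbert function. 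This buys you independence from Gr\"obner deformation, at the cost of invoking the existence of graded prime filtrations in the multigraded setting --- that associated primes of $\ZZ\times\ZZ^m$-graded modules are graded and are annihilators of homogeneous elements, which holds because the grading group is torsion-free. Both ingredients are standard, so the arguments are of comparable depth; yours is arguably the more self-contained of the two. (One cosmetic point: your identification of $\supp L$ with $\{i: S_i\neq\emptyset\}$ uses that every $d_{ij}$ is positive, i.e., that the ideals $I_i$ are proper; the paper makes the same tacit assumption.)
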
 

\begin{proof}  We represent $M$ as a quotient $F/U$ of the graded free module $F$ by a graded submodule $U$. Then we replace $U$ with its initial submodule $\ini(U)$ with respect to some term order. Next we filter $F/\ini(U)$ so that the successive quotients are shifted copies of quotients of $A$ by monomial prime ideals of $A$. It follows that the (multigraded) Hilbert series 
$$
\HS_M(x,s)=\sum_{(i,a)\in \ZZ\times \ZZ^m}  \dim M_{(i,a)} x^is^a\in \QQ[|x, s_1,\dots, s_m|]
$$ 
can be written as the sum of the Hilbert series  of residue class rings $A/P_k(-v_k,-w_k)$  where $P_k$ an ideal generated by a subset of the variables of $A$ and $(-v_k,-w_k)\in \ZZ\times \ZZ^m$ and is a shift.   That is, 
$$
\HS_M(x,s)=\sum_{i=1}^c x^{v_k}s^{w_k} \HS_{A/P_k}(x,s)
$$
and therefore 
$$
\rho_M(a)=\sup \bigl\{ \rho_{A/P_k}(a-w_k)+v_k : k=1\dots,c\bigr\}.
$$
Set 
$$
S_k=\bigl\{ i : \mbox{ there exists } j \mbox{ such that } z_{ij}\not\in P_k\bigr\}
$$
and 
$$
\lambda_{ki}=\max\bigl\{ d_{ij} : z_{ij}\not\in P_k\bigr\}
$$
if $i\in S_k$ and $\lambda_{ki}=0$ otherwise. 
Since 
$$\HS_{A/P_k}(x,s)=\frac{1}{\prod_{z_{ij}\not\in P_k}  (1-x^{d_{ij}}s_i)}$$ 
we have 
$$\rho_{A/P_k}(a)=
\begin{cases}
\sum_{j=1}^m \lambda_{ki}a_i & \mbox{ if } a\in \sum_{i\in S_k} \NN e_i,\\
-\infty  &  \mbox{ otherwise.} 
\end{cases}
$$
Summing up, we obtain the desired description  where  $L_k$ is defined as 
\begin{equation*}
L_k(u)=\sum_{i=1}^m \lambda_{ki} (u_i-w_{ki}) +v_k.
\end{equation*}
\end{proof}

Set $\mm=(x_1,\dots, x_n)$ and $\cR=R(I_1,\dots, I_m)$.  Consider the Koszul complex $K(\mm, 
\cR)$ and its homology  $H(\mm, \cR)$. By construction $H_j=H_j(\mm, \cR)$ is a finitely generated $\ZZ\times  \ZZ^m$ graded $B$-module  annihilated by $\mm$ and $\dim_K H_j(\mm, \cR)_{(u,a)}=\beta_{ju}(I^a)$. Hence $H_j$ is a finitely generated $\ZZ\times  \ZZ^m$ graded $A$-module and 
$$
\rho_{H_j}(a)=t_j^R(I^a).
$$
Then  Lemma \ref{lem1} applies so that $t_j^R(I^a)$ is given as  the supremum  of finitely many linear functions each of which is  evaluated on a translated (possibly lower dimensional) non-negative orthant. 
Indeed, one can  explicitly compute such a representation by following the steps described in the proof of Lemma \ref{lem1} and applying them to the Koszul homology module $H_i$. 

For simplicity we state only the main consequence about the asymptotic behavior of  $t_i(I^a)$ and $\reg(I^a)$. It is of course determined by the linear functions $L_k$ with $\supp L_k=[m]$.

\begin{theorem}
\label{thm1}
Let $I_1,\dots, I_m$ be given as above. Then, for every $0\leq j \leq n-1$, there exist $b_j\in \NN$ and  linear functions  $L^{(j)}_k(u)=\sum_{i=1}^m \lambda^{(j)}_{ki} u_i +\lambda^{(j)}_{k0}$  with  $1\leq k\leq b_j$ such that $\lambda^{(j)}_{ki}\in   \{d_{i1}, \dots, d_{ig_i} \}$ and   $\lambda^{(j)}_{k0} \in \ZZ$  and such that 
$$
t_j^R(I^a) =\sup\{ L^{(j)}_k(a) :  1\leq k\leq b_j  \} \quad \mbox{ for } a\gg 0.
$$
In particular, $\pd_R(I^a)$ is constant for $a\gg 0$ and 
$$\reg(I^a) =\sup\{ L^{(j)}_k(a)-j  : 0\leq j\leq n-1 \mbox{ and } \ 1\leq k\leq \ b_j  \} \quad \mbox{ for } a\gg 0. $$
\end{theorem}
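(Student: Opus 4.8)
The plan is to derive Theorem~\ref{thm1} as a direct application of Lemma~\ref{lem1} to the Koszul homology modules $H_j = H_j(\mm, \cR)$, which was already set up in the discussion preceding the statement. The starting observation is that $\dim_K H_j(\mm,\cR)_{(u,a)} = \beta_{ju}(I^a)$, so that $\rho_{H_j}(a) = t_j^R(I^a)$; this reduces the theorem to understanding the $A$-module structure of each $H_j$, and here $A$ acts because $\mm$ annihilates $H_j$. First I would invoke the ``in particular'' part of Lemma~\ref{lem1}: there exist $w_1,\dots,w_c \in \ZZ^m$ and linear functions $L_1,\dots,L_c$ with coefficients $\lambda_{ki} \in \{0\}\cup\{d_{i1},\dots,d_{ig_i}\}$ so that $\rho_{H_j}(a) = \sup\{L_k(a) : \supp L_k = [m]\}$ for $a \gg 0$. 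Retaining only the indices $k$ with $\supp L_k = [m]$ and relabelling them $1,\dots,b_j$, we get linear functions $L_k^{(j)}$ with \emph{every} coefficient $\lambda_{ki}^{(j)}$ nonzero, hence lying in $\{d_{i1},\dots,d_{ig_i}\}$, which is exactly the claimed form; the uniform threshold $b \in \NN^m$ from the lemma gives the required $a \gg 0$.

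Next I would address the two ``in particular'' assertions of the theorem. For constancy of projective dimension: $\pd_R(I^a) = \max\{j : t_j^R(I^a) \neq -\infty\} = \max\{j : H_j(\mm,\cR)_{(u,a)} \neq 0 \text{ for some } u\}$. Since for $a \gg 0$ we have $t_j^R(I^a) = \sup\{L_k^{(j)}(a)\}$ (a finite sup of finite values, with the convention that an empty sup is $-\infty$), the module $H_j$ either has a component $(-,a)$ supported in the full orthant direction or it does not; more precisely, whether the index set $\{k : \supp L_k^{(j)} = [m]\}$ is empty or not is independent of $a$, so $\pd_R(I^a)$ stabilizes. I would make this precise by noting $H_{n-j}(\mm,\cR)$ computes, up to shift, $\Ext$ of $\cR$, and that the top nonvanishing Koszul homology index is eventually constant precisely because of the $b_j$-uniform description just obtained. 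Finally, the regularity formula is immediate: by definition $\reg_R(I^a) = \max_{j\geq 0}\{t_j^R(I^a) - j\}$, and since $t_j^R(I^a) = -\infty$ for $j \geq n$ (as $\pd_R(I^a) \leq n$) while for $0 \leq j \leq n-1$ we substitute $t_j^R(I^a) = \sup_k L_k^{(j)}(a)$, interchanging the two suprema yields $\reg(I^a) = \sup\{L_k^{(j)}(a) - j : 0 \leq j \leq n-1,\ 1 \leq k \leq b_j\}$ for $a \gg 0$, taking the common threshold to be the coordinatewise maximum of the finitely many thresholds $b_j$.

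The only genuinely substantive point — and the one I expect to need the most care — is verifying that $H_j = H_j(\mm, \cR)$ really is a \emph{finitely generated} $\ZZ \times \ZZ^m$-graded $A$-module, so that Lemma~\ref{lem1} applies verbatim. It is clearly a finitely generated $B$-module since $\cR$ is a finitely generated $B$-module (indeed a cyclic one, as $\cR = B/\ker(z_{ij}\mapsto f_{ij}t_i)$) and the Koszul complex $K(\mm,\cR)$ consists of finitely generated $B$-modules; then $\mm \cdot H_j = 0$ forces $H_j$ to be a module over $B/\mm B = A$, and finite generation over $B$ plus annihilation by $\mm$ gives finite generation over $A$. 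I would also double-check the bookkeeping that $\dim_K H_j(\mm,\cR)_{(u,a)} = \beta_{ju}(I^a)$ respects the $\ZZ \times \ZZ^m$-grading correctly, i.e.\ that the $\ZZ^m$-degree $a$ on $\cR$ matches the exponent vector indexing the ideal $I^a$ under the identification $\cR_{(*,a)} \cong I^a t^a \cong I^a$; this is built into the definition of the Rees algebra grading given in the excerpt, but it is what licenses the clean translation $\rho_{H_j}(a) = t_j^R(I^a)$. Everything else is formal manipulation of suprema and the conventions around $-\infty$.
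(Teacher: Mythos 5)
Your proposal is correct and follows essentially the same route as the paper: the paper likewise obtains Theorem~\ref{thm1} by observing that each Koszul homology module $H_j(\mm,\cR)$ is a finitely generated $\ZZ\times\ZZ^m$-graded $A$-module (being a finitely generated $B$-module annihilated by $\mm$) with $\rho_{H_j}(a)=t_j^R(I^a)$, and then invoking the ``in particular'' part of Lemma~\ref{lem1}. Your extra care about finite generation over $A$ and the bookkeeping of the grading matches exactly what the paper asserts ``by construction,'' and the remaining manipulations of suprema are handled the same way.
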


If $I_i$ is generated in a single degree, say $d_i$, then there is only one choice for $\lambda^{(j)}_{ki}$, $i>0$, namely $d_i$. If this holds for all ideals $I_i$, then the supremum is obviously given by a single linear polynomial, and we obtain

\begin{corollary}
\label{thm2}
If  each $I_i$ is generated in a single degree, say $d_i$, then $\pd_R(I^a)$ is constant for $a\gg 0$, say equal to $p$, and for each $j$, $0\leq j \leq p$, there exists a linear polynomial  $L^{(j)}(u)=\sum_{i=1}^m d_i u_i +\lambda^{(j)}_{0}$    with $\lambda^{(j)}_{0} \in \ZZ$  such that 
$$
t_j^R(I^a) =L^{(j)}(a) \quad \mbox{ for } a\gg 0.
$$
In particular, there exists an linear polynomial $L(u)=\sum_{i=1}^m d_i u_i +\lambda_{0}$ such that 
$$
\reg(I^a) =L(a) \quad \mbox{ for } a\gg 0.
$$
\end{corollary}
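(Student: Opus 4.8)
The plan is to deduce Corollary~\ref{thm2} directly from Theorem~\ref{thm1} by observing that the hypothesis collapses the supremum of finitely many linear functions to a single one. First I would invoke Theorem~\ref{thm1}: for each $j$ with $0\leq j\leq n-1$ there are linear functions $L^{(j)}_k(u)=\sum_{i=1}^m\lambda^{(j)}_{ki}u_i+\lambda^{(j)}_{k0}$, $1\leq k\leq b_j$, with $\lambda^{(j)}_{ki}\in\{d_{i1},\dots,d_{ig_i}\}$, such that $t_j^R(I^a)=\sup\{L^{(j)}_k(a):1\leq k\leq b_j\}$ for $a\gg 0$. Since each $I_i$ is generated in the single degree $d_i$, we have $g_i$ possibly large but $d_{i1}=\dots=d_{ig_i}=d_i$, so the only admissible value for $\lambda^{(j)}_{ki}$ with $i>0$ is $d_i$. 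Hence every $L^{(j)}_k$ has the form $\sum_{i=1}^m d_iu_i+\lambda^{(j)}_{k0}$, i.e.\ the linear functions in the family for a fixed $j$ differ only in their constant terms.

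Next I would note that among finitely many affine-linear functions with identical linear part, the pointwise supremum is attained by the one with the largest constant term. Concretely, set $\lambda^{(j)}_0=\max\{\lambda^{(j)}_{k0}:1\leq k\leq b_j\}$ and $L^{(j)}(u)=\sum_{i=1}^m d_iu_i+\lambda^{(j)}_0$. Then $\sup\{L^{(j)}_k(a):1\leq k\leq b_j\}=L^{(j)}(a)$ for all $a\in\ZZ^m$, and in particular $t_j^R(I^a)=L^{(j)}(a)$ for $a\gg 0$. The claim that $\pd_R(I^a)$ is eventually constant, say equal to $p$, is already part of Theorem~\ref{thm1}; with this $p$ in hand the above applies for $0\leq j\leq p$ (and for $j>p$ one has $t_j^R(I^a)=-\infty$ by convention, so those indices are irrelevant).

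Finally, for the regularity statement I would use the definition $\reg_R(I^a)=\max\{t_j^R(I^a)-j:j\geq 0\}$ recalled at the start of Section~2. For $a\gg 0$ this equals $\max\{L^{(j)}(a)-j:0\leq j\leq p\}=\max_{0\leq j\leq p}\bigl(\sum_{i=1}^m d_iu_i+\lambda^{(j)}_0-j\bigr)\big|_{u=a}$. Again all these functions of $a$ share the linear part $\sum_{i=1}^m d_ia_i$, so their maximum is the single linear polynomial $L(u)=\sum_{i=1}^m d_iu_i+\lambda_0$ with $\lambda_0=\max\{\lambda^{(j)}_0-j:0\leq j\leq p\}$, and $\reg(I^a)=L(a)$ for $a\gg 0$.

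There is no real obstacle here: the entire content has been pushed into Lemma~\ref{lem1} and Theorem~\ref{thm1}, and Corollary~\ref{thm2} is the purely formal observation that a max of parallel affine functions is itself a single affine function. The only point requiring a word of care is making sure the ``eventual constancy of $\pd_R$'' is quoted from Theorem~\ref{thm1} rather than re-derived, and that the convention $t_j^R(M)=-\infty$ for $j>\pd_R(M)$ is used to discard the irrelevant homological degrees when passing to the regularity formula.
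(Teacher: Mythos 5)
Your proposal is correct and follows essentially the same route as the paper: the paper likewise deduces the corollary from Theorem~\ref{thm1} by noting that the single-degree hypothesis forces every coefficient $\lambda^{(j)}_{ki}$ to equal $d_i$, so the supremum of parallel affine functions collapses to the one with the largest constant term. Your explicit handling of the eventual constancy of $\pd_R(I^a)$ and of the convention $t_j^R=-\infty$ for $j>\pd_R$ just makes explicit what the paper leaves implicit.
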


The conclusion of  the corollary holds for a single ideal without any hypothesis on the degrees of the generators since the supremum of finitely many linear functions on $\ZZ$ is always given by a single one for large values of the argument.

\begin{remark} 
Let $J_i$ be a homogeneous reduction of $I_i$ for every $i$.  Then $\cR=R(I_1,\dots, I_m)$ is a finitely generated algebra over $R(J_1,\dots, J_m)$. It follows that the Koszul homology $H(x,\cR)$ is finitely generated over a polynomial ring whose variables have degree $(v,e_i)\in \ZZ\times \ZZ^m$ where $v$ varies in the set of degrees of the generators of $J_i$. Hence the coefficients $\lambda^{(j)}_{ki}$ in Theorem \ref{thm1} can be taken in the set of the degrees of the generators of $J_i$. Therefore Corollary \ref{thm2} holds more generally if each $I_i$ has a reduction generated in  a single degree.   Note that Kodiyalam proved in \cite{Kod} that for a single ideal $I_1$ one has $\reg(I_1^a)=ad+b$ for $a\in \NN$  and $a\gg 0$ where $d$ is the minimum of  $t_0(J)$ where $J$ varies in the set of the homogeneous reduction of $I_1$.
\end{remark}

\begin{remark}
Under the assumption that $I_i$ is generated in degree $d_i$, in \cite[Remark 3.2.]{BC4} we have observed that $\reg(I^a)\leq \sum_{i=1}^m d_i a_i +\reg_0 \cR$ for all $a\in \NN^m$ where  $\reg_0 \cR$ is the regularity computed according to the $\ZZ$-graded structure of   
$\cR=R(I_1,\dots, I_m)$ induced by inclusion  $R\to \cR$.  
So the constant $\lambda_{0}$ in Corollary \ref{thm2} is  $\leq  \reg_0 \cR$. But in general the inequality can be strict. For example, already for  $m=1$ if  $I_1$ is an ideal such that $\{ a\in \NN :  \reg I_1^a>ad_1 \}$ is finite and not empty, then $\lambda_{0}=0$ and $\reg_0 \cR >0$. Examples of ideals of this kind   are described in  \cite{Bor} and  \cite{Con}. 
\end{remark}

\begin{remark}
So far we have considered ideals in polynomial rings. If $R$ is not a polynomial ring (but still standard graded), then the projective dimension   and the regularity of $R$-modules computed via syzygies can be infinite. Nevertheless, one has a bound for $t_j^R(I^a)$ for all $j$ as in Theorem \ref{thm1}. Instead of the Koszul complex, the resolution of $K$ over the polynomial ring, one must use the free resolution of $K$ over $R$ (of infinite length if $R$ is not a polynomial ring).
\end{remark}

In order to generalize Theorem \ref{thm1} and Corollary \ref{thm2} to a general result for ideals in an arbitrary standard graded algebra $R$, we consider the  regularity based on local cohomology. It is defined by
$$
\Reg_R(M)=\sup \{i+j: H_\mm^i(M)_j\neq 0\}
$$
When $R$ is a standard graded  polynomial ring over a field, then $\reg_R(M)=\Reg_R(M)$ by a theorem of Eisenbud and Goto (for example, see \cite[Theorem 4.3.1]{BH}).

\begin{theorem}
The conclusions of Theorem \ref{thm1} and Corollary \ref{thm2} hold for ideals in arbitrary standard graded $K$-algebras, if $\reg$ is replaced by $\Reg$.
\end{theorem}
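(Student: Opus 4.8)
The plan is to reduce to the polynomial ring case already settled and then to run the proof of Theorem~\ref{thm1} essentially without change. Write $R=S/\mathfrak a$ with $S=K[x_1,\dots,x_n]$ a polynomial ring, $\deg x_i=1$, and view $R$-modules as $S$-modules via the surjection $S\to R$. First I would record the standard fact that, for a finitely generated graded $R$-module $N$, the local cohomology $H^i_{(x_1,\dots,x_n)}(N)$ is computed by the \v{C}ech complex on $x_1,\dots,x_n$ and is therefore independent of whether $N$ is regarded over $R$ or over $S$; thus $\Reg_R(N)=\Reg_S(N)$, and since $S$ is a polynomial ring the Eisenbud--Goto theorem (\cite[Theorem~4.3.1]{BH}) yields $\Reg_S(N)=\reg_S(N)=\max\{t_j^S(N)-j:j\ge 0\}$. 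So the whole problem is reduced to describing, as functions of $a\in\NN^m$, the numbers $t_j^S(I^a)=\max\deg\operatorname{Tor}^S_j(I^a,K)$ together with the projective dimension $\pd_S(I^a)$.

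Next I would reproduce the module-theoretic setup that precedes Theorem~\ref{thm1}, but over $S$. The point is that $\cR=R(I_1,\dots,I_m)$, being the image of the natural surjection $B=R[z_{ij}]\to\cR$ and hence a cyclic, so finitely generated, $\ZZ\times\ZZ^m$-graded $B$-module, is also a cyclic, finitely generated, $\ZZ\times\ZZ^m$-graded module over the polynomial ring $\widetilde B:=S[z_{ij}:1\le i\le m,\ 1\le j\le g_i]$, graded by $\deg x_i=(1,0)$ and $\deg z_{ij}=(d_{ij},e_i)$. I would then form the Koszul complex $K(x_1,\dots,x_n;\cR)$ and its homology $H_j:=H_j(x_1,\dots,x_n;\cR)$. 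Since $x_1,\dots,x_n$ is a regular sequence of variables of $S$, the Koszul complex on them is an $S$-free resolution of $K$, so $(H_j)_{(u,a)}=\operatorname{Tor}^S_j(I^a,K)_u$ and therefore $\rho_{H_j}(a)=t_j^S(I^a)$ in the notation of Lemma~\ref{lem1}. Moreover $H_j$ is a finitely generated $\widetilde B$-module annihilated by $(x_1,\dots,x_n)$, hence a finitely generated $\ZZ\times\ZZ^m$-graded module over $A=\widetilde B/(x_1,\dots,x_n)\widetilde B=K[z_{ij}]$, carrying exactly the graded structure required by Lemma~\ref{lem1}.

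The remaining step is formal: apply Lemma~\ref{lem1} to each $M=H_j$. This gives at once the description of $t_j^S(I^a)$ as a supremum of finitely many linear functions whose nonzero coefficients lie in $\{d_{i1},\dots,d_{ig_i}\}$, with full support $[m]$ for $a\gg 0$; and the same bookkeeping with the translated orthants $w_k+\sum_{i\in\supp L_k}\NN e_i$ shows that $(H_j)_{(\ast,a)}$ is nonzero either for all $a\gg 0$ or for no $a\gg 0$, so that $\pd_S(I^a)=\max\{j:(H_j)_{(\ast,a)}\ne 0\}$ is eventually constant. Substituting into $\Reg_R(I^a)=\max\{t_j^S(I^a)-j:j\ge 0\}$ then reproduces the conclusions of Theorem~\ref{thm1} and Corollary~\ref{thm2} with $\reg$ replaced by $\Reg$ (and with $t_j^R$, $\pd_R$ understood as $t_j^S$, $\pd_S$ for a polynomial presentation of $R$); the statement about $t_j^R(I^a)$ computed over $R$ itself follows the same way, using the possibly infinite $R$-free resolution of $K$ in place of the Koszul complex, as in the earlier remark.

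I expect the only genuine obstacle to be identifying the right module to feed into Lemma~\ref{lem1}. The naive choice is $H^i_{\mm}(\cR)$, whose $\ZZ^m$-degree-$a$ strand is $H^i_{\mm}(I^a)$; but these local cohomology modules are not finitely generated over $A$ (they are large in negative degrees), so Lemma~\ref{lem1} does not apply to them directly. Passing to the polynomial ring $S$ and working with the Koszul homology of $\cR$ — which stays finitely generated over $A$ precisely because only the variables $x_1,\dots,x_n$, and not all of $\widetilde B$, are killed — is exactly what removes this difficulty; after that, nothing beyond Lemma~\ref{lem1} is needed.
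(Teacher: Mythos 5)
Your proposal is correct and follows essentially the same route as the paper: present $R$ as a quotient of a polynomial ring $S$, use the independence of local cohomology of the base ring together with Eisenbud--Goto to get $\Reg_R(I^a)=\reg_S(I^a)$, and then run the Koszul-homology argument of Theorem~\ref{thm1} over $S$, feeding the finitely generated $A$-modules $H_j(\mm,\cR)$ into Lemma~\ref{lem1}. Your closing remark on why one must use Koszul homology rather than the (non-finitely generated) local cohomology modules of $\cR$ is a sensible elaboration but not a departure from the paper's argument.
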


\begin{proof}
We write $R$ as a residue class ring of a standard graded polynomial ring $S$ over $K$. Then $\Reg_R(I^a)=\Reg_S(I^a)=\reg_S(I^a)$. Therefore it can be computed from the Koszul homologies $H_j(\mm,I^a)$ where $\mm$ is now the graded maximal ideal of $S$. 

We set $\cR=R(I_1,\dots,I_m)$ as above. Then $H_j(\mm,I^a)$ is the degree $a$ component of $H_j(\mm,\cR)$ in the $\ZZ^m$-grading.  After this observation the proof of Theorem \ref{thm1} can be copied.
\end{proof}

\begin{remark}
In \cite{Gh} the author provides a  linear upper bound for $\Reg(I^a)$ where $I_1,\dots I_m$ are ideals in a standard graded algebra $R$ over an Artinian local ring.
\end{remark}

\begin{remark} 
The results above can be immediately generalized to the  behavior of $\reg(I^aM)$ (or $\Reg(I^aM)$) where $M$ is a finitely generated graded $R$-module. Instead of the multi-Rees $\cR$ alone, one considers the finitely generated $\cR$-mdodule
$$
\MM=M(I_1,\dots,I_m)=\bigoplus_a I^aM.
$$ 
The Koszul homology $H_j(\mm, \MM)$ is a finitely generated $A$-module, and the remaining arguments need no change, The case $m=1$ is due to Trung and Wang \cite{TW}.
\end{remark}

\section{Examples} 

We now discuss an example showing that, in general, $t_j^R(I^a)$ and $\reg(I^a)$ need not be linear functions of $a$ for $a\gg 0$.
 
\begin{example} Consider the ideals  $I_1=(x,y^2)$, $I_2=(x^2,y)$ and  in the polynomial ring $R=K[x,y]$.
One has 
$$
t_i(I_1^{a_1}I_2^{a_2})=
\begin{cases}
\max\{2a_1+a_2, a_1+2a_2\}+i  & \mbox{ if } i=0,1 \mbox{ and } (a_1,a_2)\in \NN^2\setminus\{(0,0)\},\\
0  &\mbox{ if }  i=0 \mbox{ and } (a_1,a_2)= (0,0).
\end{cases}
$$ 
So that 
$$
\reg(I_1^{a_1}I_2^{a_2})=\max\{2a_1+a_2, a_1+2a_2\} \mbox{ for all } (a_1,a_2)\in \NN^2.
$$

To establish these formulas we propose two approaches. First we  follow the strategy outlined above and deduce the result from the study of the multigraded Hilbert series of the Koszul homology modules $H_i(x,y,R(I_1,I_2))$.   Secondly we  write down  the resolution of $I_1^{a_1}I_2^{a_2}$ directly.    

The presentation of the Rees algebra  $\cR=R(I_1,I_2)$ is given by:  
$$
B/P\simeq \cR,\qquad
B=R[z_{11},z_{12},z_{21}, z_{22}],
$$ 
with $\ZZ\times \ZZ^2$ graded structure induced by $\deg(x)=\deg(y)=(1,0,0)$, 
$\deg(z_{11})=(1,1,0)$,  $\deg(z_{12})=(2,1,0)$, 
$\deg(z_{21})=(2,0,1)$,  $\deg(z_{22})=(1,0,1)$. 
Here 
$$
P=(y^2z_{11}-xz_{12},  yz_{21}-x^2z_{22}, xyz_{11}z_{22}-z_{12}z_{21}).
$$

Set $A=K[z_{11},z_{12},z_{21}, z_{22}]$ so that  $H_0=H_0(x,y, \cR)=\cR/(x,y)=A/(z_{12}z_{21})$. Using a filtration or by direct computation one has:

\begin{equation}
\label{partdec1}
\HS_{H_0}(x,s_1,s_2)=\frac{1}{(1-xs_1)(1-x^2s_1)(1-xs_2)}+\frac{x^2s_2}{(1-xs_1)(1-xs_2)(1-x^2s_2)}.
\end{equation} 

From this we deduce that 
$$
t_0(I_1^{a_1}I_2^{a_2})=\max\bigl\{  2a_1+a_2 : a\in \NN^2 ,  a_1+2a_2 : a\in e_2+\NN^2  \bigr\},
$$ 
that is, 
$$
t_0(I_1^{a_1}I_2^{a_2})=\max\{  2a_1+a_2 , a_1+2a_2  \}\quad \mbox{for all } a\in \NN^2.
$$

Similarly for $H_1=H_1(x,y, \cR)$ one obtains: 

\begin{equation}
\label{partdec2}
\HS_{H_1}(x,s)=\frac{x^3s_1 }{ (1-xs_1)(1-x^2s_1)(1-xs_2)}+\frac{x^3s_2 }{ (1-xs_1)(1-x^2s_2)(1-xs_2)}. 
\end{equation}

Then it follows that 
$$
t_1(I_1^{a_1}I_2^{a_2})=\sup\bigl\{  2a_1+a_2+1 : (a_1,a_2)\in e_1+\NN^2,   a_1+2a_2+1 : (a_1,a_2)\in e_2+\NN^2\bigr\}.
$$
Hence 
$$
t_1(I_1^{a_1}I_2^{a_2})=\max\bigl\{  2a_1+a_2+1, a_1+2a_2+1\bigr\}
$$ 
for all $a\in \NN^2\setminus \{(0,0)\}$.

Let us now sketch the approach via free resolutions. First one checks that $I_1^{a_1}I_2^{a_2}$ is minimally generated by the $1+a_1+a_2$ elements
$$
\begin{array}{cl}  
    x^{a_1}y^{a_2}, &  x^{a_1-1}y^{a_2+2}, x^{a_1-2}y^{a_2+4}, \dots, y^{a_2+2a_1}, \\
  \verteq &  \\
    x^{a_1}y^{a_2}, &  x^{a_1+2}y^{a_2-1}, x^{a_1+4}y^{a_2-2}, \dots, x^{a_1+2a_2}.
\end{array} 
$$
This confirms our formula
$$
t_0(I_1^{a_1}I_2^{a_2})=\max\{  2a_1+a_2 , a_1+2a_2  \}\quad \mbox{for all } a\in \NN^2.
$$
By Hilbert's syzygy theorem $I_1^{a_1}I_2^{a_2}$ has a free resolution of length $1$ (if $a_1+a_2>0$). We claim the syzygy module of $I_1^{a_1}I_2^{a_2}$ is generated by the rows of the following $(a_1+a_2)\times(a_1+a_2+1)$ matrix
$$
\phi=
\begin{pmatrix}
y^2&-x\\
&y^2&-x\\
&&\ddots&\ddots\\
&&& y^2&-x\\
x^2&&&&&-y\\
&&&&&x^2&-y\\
&&&&&&x^2&-y\\
&&&&&&&\ddots&\ddots\\
&&&&&&&&x^2&-y
\end{pmatrix}
$$
In fact, the rows of $\phi$ are syzygies. Therefore we have a complex
$$
0\to R^{a_1+a_2}\xrightarrow{\phi}\to R^{1+a_1+a_2}\to I_1^{a_1}I_2^{a_2} \to 0.
$$
Both $x^{a_1+2a_2}$ and $y^{2a_1+a_2}$ appear as maximal minors of $\phi$. Thus the ideal of maximal minors of $\phi$ has grade $2$ in $R$. Now the Buchsbaum-Eisenbud exactness criterion (for example, see \cite[1.4.3]{BH}) implies that our complex is exact, and so we know the first syzygy module of $I_1^{a_1}I_2^{a_2}$. 

Keeping track of the degrees one obtains that 
$$
t_1(I_1^{a_1}I_2^{a_2})=\max\bigl\{  2a_1+a_2+1, a_1+2a_2+1\bigr\}
$$ 
for all $a\in \NN^2\setminus \{(0,0)\}$.

\end{example} 

\begin{example}
In the example above the regularity and  the invariants $t_i(I^a)$ are  given by the supremum of two linear functions for $a\gg 0$. But even for $m=2$ the number of linear functions can be larger. An example is given by the ideals $I_1=(x, y^2, z^3)$ and $I_2=(x^4, y^3, z)$ in $R=K[x,y,z]$ for which each $t_i(I_1^{a_1}I_2^{a_2})$ and the regularity is given asymptotically by the maximum of  $3$ linear functions.  More precisely for every $a\geq (1,1)$ one has: 
$$\begin{array}{rlllll}
t_0(I_1^{a_1}I_2^{a_2})&=&\max\{ a_1+4a_2+1 ,& 2a_1+3a_2,     & 3a_1+a_2    &\},\\ \\
t_1(I_1^{a_1}I_2^{a_2})&=&\max\{ a_1+4a_2+2, & 2a_1+3a_2+1, &3a_1+a_2+2 &\},\\ \\
t_2(I_1^{a_1}I_2^{a_2})&=&\max\{ a_1+4a_2+3 ,&2a_1+3a_2+2,  &3a_1+a_2+3  & \},\\ \\
\reg(I_1^{a_1}I_2^{a_2})&=&\max\{a_1+4a_2+1 ,&2a_1+3a_2,      &3a_1+a_2+1  &\}.
\end{array}
$$
The computation has been done following the strategy outlined above, making use of Macaulay 2 \cite{M2} and CoCoA \cite{CoCoA} for the computation of the  Hilbert series of the Koszul homology $H_i(x,y,z,R(I_1,I_2))$ and of the filtration allowing to decompose the Hilbert series as a sum of rational series with positive numerators and ``partial" denominators similarly to what we have done in   (\ref{partdec1}) and (\ref{partdec2}).  
\end{example}
We expect that there is no bound on the number of linear functions, at least if one allows an arbitrary number of indeterminates.
\bigskip

\emph{Acknowledgment.}\enspace The authors are grateful to Alessio Sammartano for his help in using Macaulay 2 and to Marc Chardin for pointing out the results in \cite{BCH}.


\begin{thebibliography}{99.}
	
\bibitem{CoCoA}	
J. Abbott, A.M. Bigatti and G. Lagorio,
\emph{CoCoA-5: a system for doing {C}omputations in {C}ommutative {A}lgebra.}
Available at \url{http://cocoa.dima.unige.it}.

\bibitem{BCH}
A. Bagheri, M. Chardin and Huy T{\`a}i H{\`a}, \emph{The eventual shape of Betti tables of powers of ideals.} Math. Res. Lett. 20 (2013), 1033--1046.

\bibitem{Bor}  K.~Borna, 
\emph{ On linear resolution of powers of an ideal.}
Osaka J. Math. 46 (2009),  1047--1058. 
 

\bibitem{BC4} W. Bruns and A. Conca. 
\emph{Linear resolutions of powers and products.} 
Preprint 2016,  arXiv:1602.07996. 
 
\bibitem{BH} W.~Bruns and  J.~Herzog, 
\emph{ Cohen-Macaulay rings,} 
Rev.~ed. Cambridge Studies in Advanced Mathematics {\bf 39},
Cambridge University Press 1998. 

\bibitem{Con}  A.~Conca, 
\emph{Regularity jumps for powers of ideals.}
 Commutative algebra, 21--32,  Lect. Notes Pure Appl. Math., 244, Chapman \& Hall/CRC, Boca Raton, FL, 2006. 


\bibitem{CHT} D.~Cutkosky, J.~Herzog, N.V.~Trung,
\emph{ Asymptotic behavior of the Castelnuovo-Mumford regularity.} Compositio
Math. {\textbf {118}} (1999), 243--261.
 
\bibitem{Gh} D. Ghosh, 
\emph{Asymptotic linear bounds of Castelnuovo-Mumford regularity in multigraded modules.}
J. Algebra {\bf445} (2016), 103--114.

\bibitem{M2} D. R. Grayson and M. E. Stillman, \emph{Macaulay2, a software system for research in algebraic geometry.}Available at \url{http://www.math.uiuc.edu/Macaulay2/}.

\bibitem{Kod} V.~Kodiyalam,
\emph{ Asymptotic behavior of Castelnuovo-Mumford regularity.}
Proc. Amer. Math. Soc. {\textbf {128}} (2000), 407--411.

\bibitem{TW}
N.V. Trung and H.-J. Wang, 
\emph{On the asymptotic linearity of castelnuovo-mumford regularity.}
J. Pure Appl. Algebra \textbf{201} (2005), 42--48 .


 \end{thebibliography}
\end{document}